\newtheorem{theorem}{Theorem}
\newtheorem{lemma}{Lemma}
\newtheorem{assumption}{Assumption}
\newtheorem{statement}{Statement}
\newtheorem{remark}{Remark}
\title{\LARGE \bf
    Extremum Seeking with High-Order Lie Bracket Approximations: Achieving  Exponential Decay Rate}
\author{Victoria Grushkovskaya$^{1,3}$ and Sameh A. Eisa$^{2}$% <-this % stops a space
\thanks{$^{1}$Victoria Grushkovskaya is with the Department of Mathematics,
        University of Klagenfurt, 9020 Klagenfurt am W\"orthersee, Austria
        {\tt\small viktoriia.grushkovska@aau.at}}%
\thanks{$^{2}$Sameh A. Eisa is with the Department of Aerospace Engineering and Engineering Mechanics, University of Cincinnati, Ohio, United States {\tt\small eisash@ucmail.uc.edu}}
\thanks{$^{3}$Institute of Applied Mathematics \& Mechanics, National Academy of Sciences of Ukraine}
}
\begin{document}

\maketitle
\thispagestyle{empty}
\pagestyle{empty}

%%%%%%%%%%%%%%%%%%%%%%%%%%%%%%%%%%%%%%%%%%%%%%%%%%%%%%%%%%%%%%%%%%%%%%%%%%%%%%%%
\begin{abstract}

This paper focuses on the further development of the Lie bracket approximation approach for extremum seeking systems. Classical results in this area  provide extremum seeking algorithms with exponential convergence rates 
for quadratic-like cost functions, and polynomial decay rates for cost functions of higher degrees. 
This paper proposes a novel control design approach that ensures the motion of the extremum seeking system along directions associated with higher-order Lie brackets, thereby ensuring exponential convergence for cost functions that are polynomial-like but with degree greater than two.

\end{abstract}
\section{Introduction }
Consider the following control-affine system:
\begin{equation}\label{eqn:controlAffineIntro}
    \dot{x}=f_0(x)+ \sum_{i=1}^{m} {\varepsilon^{-p_i}} f_i(x) u_i\left(k_i{t}/{\varepsilon}\right),\\
\end{equation}
where $x \in \mathbb{R}^n$ is the state space vector, $p_i\in (0,1)$, $0<\varepsilon\ll1$, $f_0$ is the drift (uncontrolled) vector field of the system, $f_i$ are the control vector fields, $u_i$ are the control inputs, $m  \in \mathbb N$ is the number of control inputs, and $k_i\in\mathbb Q$. Control-affine systems of the form~\eqref{eqn:controlAffineIntro} arise in many real-world applications, including (but not limited to) robotics, multi-agent systems, and flight dynamics (e.g., \cite{krstic2008extremum,grushkovskaya2018family,ECC_2024,eisa2023}). 
The nature of system~\eqref{eqn:controlAffineIntro} allows  applications of geometric control methods and techniques relying on Lie bracket approximation ideas\cite{kurzweil1987limit,Sus91,bullo04,Liu97,DurrAuto,Sch14,GZE18,Labar19,PE23}.
Such approaches have been widely used in the study of motion planning for underactuated systems, including those with nonholonomic constraints (e.g., \cite{bullo04,Liu97,Sus91,suttner2020extremum,ZuSIAM,doi:10.1080/00207179.2016.1257157,GZ23,GZ24_CDC}), as well as in model-free optimization and control via extremum seeking (ES) approaches \cite{DurrAuto,Sch14,GZE18,Labar19,PE23}, which is the main focus of this paper. %In this paper, we focus on Lie bracket approximations relevant to ES control. 
%
%Extremum seeking systems \cite{Ar03,scheinker2024100} are model-free and real-time dynamic optimization and control techniques that aim at stabilizing a dynamical system about the extremum point of an objective function, for which we have access to its measurements but not its expression (see, e.g., \cite{scheinker2024100} for a comprehensive review).
Extremum seeking is  model-free, real-time dynamic optimization and control framework aimed at stabilizing a dynamical system about the extremum point of an objective function that is accessible through measurements but whose analytical form is unknown~\cite{Ar03,scheinker2024100}.
ES methods referred to as classic-based (e.g., \cite{Kr00,yilmaz2025exponential,guay2015time}) rely on classical averaging methods \cite{khalil2002nonlinear,Maggia2020higherOrderAvg} for analysis and design. Another family of approaches exploit control-affine ES systems and 
rely on Lie bracket approximation for analysis and design, such as, e.g.,  \cite{DurrAuto,Sch14,GZE18,Labar19,PE23}. 
%
%Now we briefly summarize the development of Lie bracket approximations of control-affine ES systems. 
In particular, for the case $p_i=0.5,k_i=1\text{ for all }i$, a first-order Lie bracket system (LBS) approximation of~\eqref{eqn:controlAffineIntro} and ES control laws were given, e.g., in \cite{DurrAuto,Sch14,GZE18}. 
% , which also admits drift vector field and is defined not only for underactuated systems. 
% In \cite{scheinker2012minimum}, the authors utilized the first-order LBS in \cite{DurrAuto} and provided a universal semi-global stabilizing control law based on Lyapunov function candidates; in \cite{scheinker2014non} they also  extended the first-order LBS in \cite{DurrAuto} to a non-smooth setting. 
%Another control law with bounded update rate was provided in \cite{Sch14}. In \cite{GZE18}, a generalized formulation and first-order LBS approach was provided for a class of control-affine ES systems that: (i) unifies previous works such as \cite{DurrAuto,Sch14}, and (ii) provides a new control law with enhanced stability guarantees. 
Note that such methods define an ES system whose first-order LBS behaves like a gradient flow of the objective function. In \cite{Labar19}, the conditions on $p_i$ and $k_i$ were further generalized compared to~\cite{DurrAuto}, and a second-order LBS was introduced to approximate \eqref{eqn:controlAffineIntro}. This extension enabled the introduction of a Newton-based ES approach, since the inclusion of second-order Lie brackets provided access to  second-order derivative information (i.e., to the Hessian), and thus allowed to have a behavior similar to a Newton-type flow. In the recent paper \cite{PE23}, the authors proposed a generalized framework for constructing higher-order LBSs to approximate \eqref{eqn:controlAffineIntro}. They also showed that LBS approximations themselves are averaging terms, which guarantees  the closeness of trajectories between  LBSs and the original system \eqref{eqn:controlAffineIntro}, provided that $\varepsilon$ is  small enough. 
 Moreover, it was observed that ES designs based on third-order LBS possess a faster convergence rate when compared with even Newton-based ES; the authors in \cite{PE23} argued (without proof) that the observed faster convergence is due to the inclusion of third-order Lie brackets, which provide access to higher-order derivatives beyond Hessian (i.e., third-order derivative information).

\textbf{Motivation \& contributions.} Inspired by the concept of higher-order Lie bracket averaging, this paper further explores the application of these techniques to ES problems, with a particular focus on achieving faster convergence rates. In particular, many ES algorithms based on first-order Lie bracket approximation exhibit exponential convergence when the cost function  $J(x)$ behaves locally like a quadratic function near the extremum point $x^*$  (i.e., $J(x)\sim \|x-x^*\|^{2}$). However, if the cost function behaves like a higher-degree polynomial near the extremum, i.e., $ J(x) \sim \|x - x^*\|^m  $ with $ m > 2$, such algorithms exhibit only a polynomial decay rate~\cite{GZE18}.
 Even though it was observed in \cite{PE23} that using higher-order Lie brackets to design ES may lead to faster convergence rate when $J(x)\sim \|x-x^*\|^{4}$, no conclusion or a proof was provided regarding the nature of the faster convergence resulting from higher-order Lie brackets. 
 
 In this paper, we provide a preliminary study of a class of ES problems related to the unconstrained minimization of a cost function $J(x)$.
The function $J$ is unknown in terms of an explicit analytic expression, but it can be evaluated (measured) at any point. We focus on designing an ES control system of the form
$
 \dot x = u(t,J(x)),
$
such that the system's trajectories tend  to an extremum point of $J$. To develop our approach,  we assume that the cost function  behaves locally like an $m$-th order polynomial near the minimizer $x^*\in\mathbb R^n$, i.e., $J(x)\sim \|x-x^*\|^{m}$, with some $m\ge 2$. 

The main contribution of this paper is a novel ES design framework that leverages the excitation of higher-order Lie brackets to steer the system along directions corresponding to higher-order derivatives of the cost function. This approach helps to increase the convergence rate, in particular,  ensuring the exponential convergence even for cost functions that are not quadratic in nature. Furthermore, we generalize the result of~\cite{GZE18}, which described a family of vector fields whose first-order Lie bracket equals the gradient of the cost function. In this paper, we extend this idea by deriving a formula that generates vector fields such that the corresponding $\ell$-th order Lie bracket equals the $\ell$-th derivative of the cost function. In addition, we discuss several strategies for exciting Lie brackets using different numbers of dither signals, which gives  higher flexibility in the design of control vector fields. 

\section{Preliminaries}
% \subsection{Problem statement}
% This paper investigates a class of extremum seeking problems involving the unconstrained minimization of a cost function $J(x)$.
% The function $J$ is unknown in terms of an explicit analytic expression, but it can be evaluated (measured) at any point. We focus on designing an extremum seeking control system of the form
% $$
%  \dot x = u(t,J(x)),
% $$
% such that the system's trajectories  tend asymptotically to an extremum point of the function $J$. To develope our approach, in this paper we assume that the cost function  locally behave like an $m$-th order polynomial near the minimizer $x^*\in\mathbb R^n$, i.e., $J(x)\sim \|x-x^*\|^{m}$, with some $m\ge 2$. The main goal of this paper is to construct an extremum seeking system whose trajectories converge \emph{exponentially} to $x^*$, in contrast to many existing methods that typically achieve only polynomial rates of convergence.

\subsection{Notations} 

Throughout the text, $\mathbb R^+=[0,\infty)$  denotes the set of all non-negative real numbers;
 $B_\delta(x^*)$ is the $\delta$-neighborhood of $x^*{\in} \mathbb R^n$,  $\overline{B_\delta(x^*)}$ is its closure;
 for  $h{\in} C^1(\mathbb R^n;\mathbb R)$, $\xi{\in}\mathbb R^n$, we denote % we denote the vector
% $\frac{\partial h(x)}{\partial x}$ evaluated at $x=\xi$  by $\frac{\partial h(\xi)}{\partial x}$, and
 $\nabla h(\xi):=\frac{\partial h(x)}{\partial x}^T\Big|_{x=\xi}$ to be a column vector;
for $h\in C^N(\mathbb R;\mathbb R)$, we define its $\ell$-th order derivative as $h^{(\ell)}(x):=\frac{d^\ell h(x)}{dx^\ell}$, for any $\ell\in\{0,\dots,N\}$, with $h^{(0)}(x):=h(x)$;
for an $f:\mathbb R\to\mathbb R$,  $f(z)=O(z)$ as $z\to 0$ means that there is a $c>0$ such that $|f(z)|\le c|z|$ in a neighborhood of $0$;
for  $f,g:\mathbb R^n\to\mathbb R^n $, $x\in\mathbb R^n$, the Lie derivative is defined as
 $L_gf(x)=\lim _{s\to0}\frac{f(x+sg(x))-f(x)}{s}$, 
 and 
 $[f,g](x)=L_fg(x)-L_gf(x)$ is the first order Lie bracket. 
 To define higher-order Lie brackets, we introduce $\ell$-dimensional multi-index $I_\ell=(i_1,\dots ,i_\ell)$; then
 $f_{I_\ell}(x)= \left[\big[[f_{i_1},f_{i_2}],f_{i_3}\big],\dots,f_{i_\ell}\right](x)$ --  the right-iterated Lie bracket of length $\ell$, or $(\ell-1)$-order Lie bracket;
for $I_1{\in}\{1,...,m\}$, $f_{I_1}$ denotes a corresponding  vector field;
$L^\infty_{[0,\varepsilon]}$ -- the class of essentially  bounded measurable functions on $[0,\varepsilon]$.

\subsection{Lie bracket approximations} 
Consider the control-affine system \eqref{eqn:controlAffineIntro}, where $x=(x_1,\dots,x_n)^T{\in}\mathbb R^n$ is the state vector, $x(t_0)=x^0{\in}\mathbb R^n$ (without loss of generality, we assume $t_0=0$), $\varepsilon>0$ is a small parameter, $f_i{:\mathbb R^n\to\mathbb R^n}$ are continuously differentiable (up to any order) vector fields , and $u_i$ are continuous in $t$ and $T$-periodic functions with zero average, i.e., $\int _0^Tu_i(\tau)d\tau=0$. LBS approximations of \eqref{eqn:controlAffineIntro} up to a third-order are \cite{PE23}:
\begin{equation}\label{eqn:LBS_main}
    \dot {\bar x}{=}f_0(\bar x)+ \sum_{i = 1}^r L_i(\bar x),
    \end{equation}
with $ L_1 = 0$, $ L_2 = \sum_{\underset{j_2=j_1+1}{j_1 = 1}}^m [f_{j_1}, f_{j_2}]$,\\
%\begin{align}
      %   L_1 &= 0, \nonumber\\
      %  L_2 &= \sum_{j_1 = 1}^m \sum_{j_2=j_1+1}^m \nu_{j_1 j_2}[f_{j_1}, f_{j_2}],\nonumber\\
    $    L_3 = \sum_{\underset{j_2=j_1+1}{j_1,j_3 = 1}}^m  \nu_{j_1 j_2 j_3} [f_{j_3},[f_{j_1},f_{j_2}]]$,\\
       % \begin{split}\nonumber
     $   L_4 = \sum_{{\underset{j_2=j_1+1}{j_1,j_3 = 1}}}^m\big( \sum _{j_4 = 1}^m \beta_{1_{j_1 j_2 j_3 j_4}} \Big[[f_{j_1},f_{j_2}],[f_{j_3},f_{j_4}]\Big]
        + \sum _{j_4 = j_3+1}^m\beta_{2_{j_1 j_2 j_3 j_4}} [[[f_{j_1},f_{j_2}],f_{j_3}],f_{j_4}]\big)$,
 %   \end{split}
% \end{align}
where $\nu_{j_1 j_2}$, $\nu_{j_1 j_2 j_3}$, $\beta_{1_{j_1 j_2 j_3 j_4}}$ and $\beta_{2_{j_1 j_2 j_3 j_4}}$ are coefficients resulting from the iterated integrals of the dither input signals (formulas are provided in \cite[Section 4]{PE23}). 
Truncating \eqref{eqn:LBS_main} at $r=2$ provides first-order LBS (e.g., \cite{DurrAuto,GZE18}). Similarly, truncating \eqref{eqn:LBS_main} at $r=3$ and $r=4$ provides second- and third-order LBS, respectively.
% Consider a control-affine system
% \begin{equation}\label{aff}
%   \dot x=f_0(x)+\frac{1}{\sqrt\varepsilon}\sum_{j=1}^\ell f_j(x) u_j\left(\frac{t}{\varepsilon}\right),
% \end{equation}
% where $x=(x_1,\dots,x_n)^T{\in}\mathbb R^n$ is the state vector, $x(t_0)=x^0{\in}\mathbb R^n$ (without loss of generality, we assume $t_0=0$), $\varepsilon>0$ is a small parameter,
% %$f_i$ are continuously differentiable vector fields, i.e.
% $f_j{:\mathbb R^n\to\mathbb R^n}$, $j=1,\dots,\ell$, and 
%  $u_j(t)$ are continuous  $T$-periodic functions with zero avarage, i.e., $\int _0^Tu_j(\tau)d\tau=0$. Moreover, we assume that
% $ \int _0^T\int _0^\theta  u_{i}(\theta) u_{j}(\tau)d\tau ds{{=}}\beta_{i,j} T$, $T>0$, $\beta_{i,j}\in\mathbb R$, $i,j=\overline{1,\ell}$.
% It can be shown that the trajectories of~\eqref{aff} approximate trajectories of
% the following \textit{Lie bracket system}:
% \begin{equation}\label{affLie}
%     \dot {\bar x}{=}f_0(\bar x){+}\sum _{i{<}j}{\beta_{j,i}}[f_i,f_j](\bar x),\quad\bar x(0)=x^0.
%   \end{equation}
	% The stability properties of systems~\eqref{aff} and \eqref{affLie} are related as follows.
   The stability properties of systems~\eqref{eqn:controlAffineIntro} and \eqref{eqn:LBS_main} are related as follows. 
  \begin{lemma}[\cite{DurrAuto,GZE18,PE23}]~\label{dthm}
% Given a system~\eqref{aff}, let $f_0,f_i\in C^2(\mathbb R^n;\mathbb R^n)$. 
\textit{If a compact set $ S\subset\mathbb R^n$ is locally (globally) uniformly asymptotically stable for~\eqref{eqn:LBS_main} then it is locally (semi-globally) practically
uniformly asymptotically stable for~\eqref{eqn:controlAffineIntro}.
}  \end{lemma}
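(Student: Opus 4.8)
The plan is to combine a finite-horizon trajectory-approximation (averaging) estimate between \eqref{eqn:controlAffineIntro} and \eqref{eqn:LBS_main} with the infinite-horizon information encoded in the assumed asymptotic stability of $S$, along the by-now-standard averaging-to-practical-stability route (cf.\ \cite{DurrAuto}). Write $\mathrm{dist}(x,S):=\inf_{s\in S}\|x-s\|$. \emph{Ingredient one (closeness of trajectories).} As shown in \cite{PE23}, each bracket term $L_i$ in \eqref{eqn:LBS_main} is an averaging term of \eqref{eqn:controlAffineIntro}; consequently, for every compact $K\subset\mathbb R^n$, every $T>0$ and every $\eta>0$ there is $\bar\varepsilon>0$ such that for all $\varepsilon\in(0,\bar\varepsilon)$ and all $x^0\in K$ the solution $x(\cdot)$ of \eqref{eqn:controlAffineIntro} with $x(0)=x^0$ is defined on $[0,T]$ and $\sup_{t\in[0,T]}\|x(t)-\bar x(t)\|\le\eta$, where $\bar x(\cdot)$ solves \eqref{eqn:LBS_main} with $\bar x(0)=x^0$. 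I would invoke this estimate from \cite{PE23} (or \cite{DurrAuto,GZE18}) rather than reprove it. \emph{Ingredient two ($\mathcal{KL}$ bound).} Uniform asymptotic stability of $S$ for \eqref{eqn:LBS_main} is equivalent to the existence of $\beta\in\mathcal{KL}$ with $\mathrm{dist}(\bar x(t),S)\le\beta(\mathrm{dist}(\bar x(0),S),t)$ for all $t\ge0$, for $\bar x(0)$ in a neighborhood $\mathcal N$ of $S$ in the local case, or in $\mathbb R^n$ in the global case.

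Next I would assemble the estimate by a windowing argument. Fix a target accuracy $\delta>0$ and a compact set $K$ of initial conditions --- arbitrary and fixed in the semi-global case, a small closed neighborhood of $S$ contained in $\mathcal N$ in the local case. From Ingredient two one extracts a compact set $\Omega\supseteq K$ that is forward invariant for \eqref{eqn:LBS_main} and a horizon $T>0$ after which every solution of \eqref{eqn:LBS_main} started in $\Omega$ has distance to $S$ at most $\delta/4$ and lies in a strictly smaller forward-invariant set contained in the $\delta$-neighborhood of $S$. Apply Ingredient one on $[0,T]$ with $K$ replaced by $\Omega$ and $\eta>0$ small enough that $\eta\le\delta/4$ and that the trajectory of \eqref{eqn:controlAffineIntro} cannot leave a fixed compact enlargement of $\Omega$; then $\mathrm{dist}(x(T),S)\le\beta(\mathrm{dist}(x(0),S),T)+\eta\le\delta/2$ and $x(T)\in\Omega$. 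Reapplying the same two estimates on the successive windows $[kT,(k+1)T]$ --- with $\Omega$ playing the role of the common compact set at each step --- keeps every $x(kT)$ in $\Omega$ and forces $\mathrm{dist}(x(t),S)\le\delta$ for all $t\ge T$, while on $[0,T]$ one obtains a $\beta(\mathrm{dist}(x(0),S),t)+\delta$-type overshoot bound. As $\delta>0$ and $K$ were arbitrary, this is exactly local (resp.\ semi-global) practical uniform asymptotic stability of $S$ for \eqref{eqn:controlAffineIntro}; the stability half of the definition is recovered by the same argument run with $K$ a small ball around $S$.

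The step I expect to be the main obstacle is precisely this passage from the finite-horizon closeness estimate to the infinite-horizon conclusion: the constants must be fixed in the right dependency order ($\delta$ first; then $\Omega$, $T$ and the inner invariant set from the $\mathcal{KL}$ bound; then $\eta$; and only then $\bar\varepsilon$ from Ingredient one), and one must verify that the perturbed trajectory never exits the compact region on which both ingredients are valid, so that the concatenation over the windows $[kT,(k+1)T]$ is legitimate and the accumulated $O(\eta)$ errors remain absorbed within the $\delta$-tolerance. This bookkeeping is somewhat delicate but routine; everything else is either quoted (Ingredient one) or a standard characterization of asymptotic stability (Ingredient two).
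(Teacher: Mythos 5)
The paper states this lemma without proof, quoting it from \cite{DurrAuto,GZE18,PE23}; your sketch reconstructs the standard argument used in those references (finite-horizon trajectory closeness from the Lie bracket/averaging approximation, a $\mathcal{KL}$ characterization of uniform asymptotic stability, and a windowed concatenation with the constants fixed in the order $\delta$, then $\Omega$ and $T$, then $\eta$, then $\bar\varepsilon$), and it is correct in outline. The only detail worth adding is that, since \eqref{eqn:controlAffineIntro} is time-varying, the closeness estimate must hold uniformly in the re-initialization times $kT$, which follows from the periodicity of the dithers; with that noted, your bookkeeping matches the cited proofs.
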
	
  We omit the notions of practical asymptotic stability and practical exponential stability because of the space limits and refer the readers to, e.g.,~\cite{DurrAuto,GZE18}.
%Below we recall the notion of practical stability.%
%	 \begin{definition}
%A compact set   $S\subset\mathbb R^n$ is said to be \emph{locally practically uniformly asymptotically stable} for~\eqref{eqn:controlAffineIntro} if:\\
%-- it is \emph{practically uniformly stable}, i.e. for every $\rho>0$ there exist $\delta>0$ and $\bar\varepsilon>0$ such that, for all $t_0\ge0$ and $\varepsilon\in(0,\bar\varepsilon)$, any  solution of~\eqref{eqn:controlAffineIntro} with $x^0\in B_{\delta}(S)$
%satisfies $x(t)\in  B_{\rho}(S)$ for all $t\ge t_0$;\\
%-- $\hat\delta$-\emph{practically uniformly attractive} with some $\hat\delta>0$, i.e. for every $\rho>0$ there exist  $t_\rho\ge0$ and $\bar\varepsilon>0$ such that, for all $t_0\ge0$ and $\varepsilon\in(0,\bar\varepsilon)$,  any  solution of~\eqref{eqn:controlAffineIntro} with $x^0\in B_{\hat\delta}(S)$
%satisfies $x(t)\in B_{\rho}(S)$ for all $t\ge t_0+t_1$.\\
% If the attractivity property holds for every $\hat\delta{>}0$, then the set $S$ is called  {semi-globally practically uniformly asymptotically stable} for~\eqref{eqn:controlAffineIntro}.

Lemma~\ref{dthm} establishes the relation between the solutions of control-affine system~\eqref{eqn:controlAffineIntro} and the corresponding 
first-order Lie bracket system
LBS~\eqref{eqn:LBS_main}. 
Let us recap the approach from \cite{GZE18} given its relevance to the contributions of this paper. For a special class of \eqref{eqn:controlAffineIntro} when $f_0=0$, $p_i=0.5$, $k_i=1$ for all $i$, the result of Lemma~\ref{dthm} can be exploited for solving the extremum seeking problem in the following way~\cite{DurrAuto,GZE18}: let us consider a class of ES systems of the form
\begin{equation}\label{int}
\dot x=\varepsilon^{-1/2}\Big(F_1(J(x))u_{1}^\varepsilon( t)+F_{2}(J(x)) u_{2}^\varepsilon( t)\Big),
\end{equation}
where  $F_1\circ J,F_2\circ J\in C^2(D)$ satisfy   $[F_1,F_2](Z)=1$ for all $z\in\mathbb R$ (which implies $[F_1\circ J,F_2\circ J](x)=\nabla J(x)$), $u_1^\varepsilon(t)=2\sqrt{{\pi}{\varepsilon^{-1}}}\cos\big({2\pi  t}{\varepsilon^{-1}}\big)$, $u_2^\varepsilon(t)=2\sqrt{{\pi}{\varepsilon^{-1}}}\sin\big({2\pi  t}{\varepsilon^{-1}}\big)$, and assume that the cost function $J$ satisfies the following properties in some domain $D\subseteq\mathbb R^n$:
\begin{assumption}
The function $J\in C^2(D,\mathbb R^n)$, and
\begin{itemize}
    \item [A1.1)] there is an $x^*{\in} D$  s. t. 
    $\nabla J(x){=}0 \text{ if and only if }x=x^*,$
    and $J(x^*)=J^*{\in}\mathbb R$, $J(x)>J(x^*)$ for all $x\in D{\setminus}\{x^*\}$;
    \item[A1.2)] there are $\alpha_1,\alpha_2,\beta_1,\beta_2,\mu$, and $m\ge 1$, such that
     $$ \begin{aligned}
     \alpha_1\|x{-}x^*\|^{m} \le &J(x)-J^* \le \alpha_2\|x{-}x^* \|^{m},\\
      \beta_1 (J(x)-J^*)^{1{-}\frac{1}{m}}\le  & \|\nabla J(x)\|\le\beta_2 (J(x)-J^*)^{1{-}\frac{1}{m}},\\
    \left\|\frac{\partial^2 J(x)}{\partial x^2}\right\|{\le}&\mu (J(x)-J^*)^{1{-}\frac{2}{m}}\text{ for all }x\in D.
    \end{aligned}
    $$
\end{itemize}
Assumption A1.1) states that the cost function $J$ possesses an isolated local minimum at $x^*$,where it attains the value  $J^*$.
  Assumption A1.2) reflects the requirement that $J$ exhibits a local behavior similar to that of a power function.
\end{assumption}
As follows from~\cite{GZE18}, under the above conditions the point $x=x^*$ is practically asymptotically stable for~\eqref{int}, with the convergence rate dependent on the parameter $m$ in A1.2): %Namely, the following result follows from~\cite{GZE18}:
\begin{lemma}\label{lem_decay}
\textit{  If the cost function $J\in C^2(\mathbb R^n;\mathbb R)$ satisfies  Assumption~1 in a domain $D\subset \mathbb R^n$, then $x^*$ is
 practically exponentially stable for system~\eqref{int} if $m=2$, and $x^*$ is  practically asymptotically stable for system~\eqref{int} if $m>2$.
Namely, for any $\delta$ such that $\overline{B_\delta(x^*)}\subset D$,  any $\bar\lambda\in(0,\alpha\kappa_1)$, and $\rho\in(0,\delta)$, there exists an $\bar\varepsilon>0$ such that, for any $\varepsilon\in(0,\bar\varepsilon]$,
$\lambda\in(0,\bar\lambda]$, the solutions of system~\eqref{int} with $x^0{\in} B_{\delta}(x^*)$ exhibit the following decay rate:
\begin{itemize}
    \item if $m=2$, then
    $\|x(t)-x^*\|\le c_{m}\|x^0-x^*\|e^{-\lambda t}+\rho;$
    \item if $m>2$, then\\
    $
    \|x(t)-x^*\|\le \Big(c_{m1}\|x^0-x^*\|^{2-m}+ c_{m2} t\Big)^{-1/(m-2)}+\rho,
    $
    with some $c_m,c_{m1},c_{m2}>0$.
\end{itemize}
%Here $c_m,\tilde c_m$ are positive constants; $c_m$  can be made arbitrarily close to 1 by choosing a sufficiently small $\varepsilon$. 
}
\end{lemma}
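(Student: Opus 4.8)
The plan is to reduce the analysis of~\eqref{int} to that of its associated first-order Lie bracket system (LBS), to solve a scalar differential inequality for a cost-based Lyapunov function, and then to lift the resulting estimate back to~\eqref{int} via the averaging machinery underlying Lemma~\ref{dthm}. First I would write down the LBS~\eqref{eqn:LBS_main} of~\eqref{int}: by construction (\cite{DurrAuto,GZE18}), the dithers $u_1^\varepsilon,u_2^\varepsilon$ are normalized and $F_1,F_2$ are chosen so that $[F_1\circ J,F_2\circ J](x)=\nabla J(x)$, and the resulting first-order Lie-bracket term makes the LBS of~\eqref{int} the gradient-descent flow $\dot{\bar x}=-\nabla J(\bar x)$. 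By Lemma~\ref{dthm}, (local/semi-global) asymptotic stability of $x^*$ for this flow already yields (local/semi-global) practical asymptotic stability for~\eqref{int}; in addition, the closeness-of-trajectories estimates behind Lemma~\ref{dthm} (see~\cite{DurrAuto,GZE18,PE23}) provide, on each compact interval $[0,T]$, a bound $\|x(t)-\bar x(t)\|\le\gamma(\varepsilon,T)$ with $\gamma(\varepsilon,T)\to0$ as $\varepsilon\to0$, which is what will produce both the residual $\rho$ and the $\varepsilon$-dependence of the prefactor $c_m$.

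Next I would carry out the scalar comparison for the gradient flow. Setting $V:=J(\bar x)-J^*\ge0$ along $\dot{\bar x}=-\nabla J(\bar x)$ gives $\dot V=-\|\nabla J(\bar x)\|^2$, and A1.2) yields $\|\nabla J\|^2\ge\beta_1^2(J-J^*)^{2-2/m}$, so $\dot V\le-\beta_1^2V^{2-2/m}$. If $m=2$ this is linear, $\dot V\le-\beta_1^2V$, hence $V(t)\le V(0)e^{-\beta_1^2t}$; combining with the sandwich $\alpha_1\|\bar x-x^*\|^{m}\le V\le\alpha_2\|\bar x-x^*\|^{m}$ from A1.2) gives $\|\bar x(t)-x^*\|\le(\alpha_2/\alpha_1)^{1/2}\|x^0-x^*\|e^{-\beta_1^2t/2}$, i.e. exponential decay at rate up to $\alpha\kappa_1$ with $\kappa_1=\beta_1^2$ and $\alpha=1/m$. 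If $m>2$ the exponent $2-2/m>1$, and integrating the resulting inequality $\frac{d}{dt}V^{-(m-2)/m}\ge\frac{m-2}{m}\beta_1^2$ gives $V(t)\le\bigl(V(0)^{-(m-2)/m}+\frac{m-2}{m}\beta_1^2t\bigr)^{-m/(m-2)}$; the same sandwich then yields $\|\bar x(t)-x^*\|\le\bigl(c_m\|x^0-x^*\|^{2-m}+\tilde c_mt\bigr)^{-1/(m-2)}$ with $c_m=(\alpha_1/\alpha_2)^{(m-2)/m}$ and $\tilde c_m=\alpha_1^{(m-2)/m}\frac{m-2}{m}\beta_1^2$. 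This step is essentially mechanical.

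It remains to lift these LBS bounds to~\eqref{int}. Fix $\delta$ with $\overline{B_\delta(x^*)}\subset D$, $\rho\in(0,\delta)$, and $\lambda\in(0,\bar\lambda]$ with $\bar\lambda<\alpha\kappa_1$. The estimate from the previous step gives a finite time $T_\rho$ (depending on $\delta,\rho$) after which the LBS trajectory from any $x^0\in B_\delta(x^*)$ lies in $B_{\rho/3}(x^*)$; choosing $\bar\varepsilon$ so small that $\gamma(\varepsilon,T_\rho)<\rho/3$ for all $\varepsilon\le\bar\varepsilon$ then gives the claimed estimate (with the extra $+\rho$) on $[0,T_\rho]$, while for $t\ge T_\rho$ the practical stability granted by Lemma~\ref{dthm} keeps $x(t)$ in $B_\rho(x^*)$. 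Because $\lambda$ is taken strictly below the exact LBS rate $\alpha\kappa_1$, there is slack to absorb the $O(\gamma)$ error into the prefactor, so $c_m$ equals the geometric constant of the previous step multiplied by $1+O(\gamma(\varepsilon,T_\rho))$ and can be brought arbitrarily close to its limiting value as $\varepsilon\to0$ (and to $1$ after also shrinking the neighborhood, where $\alpha_2/\alpha_1\to1$).

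The main obstacle is this last step: the closeness bound $\gamma(\varepsilon,T)$ deteriorates as $T\to\infty$ for fixed $\varepsilon$, so one must interleave the finite-horizon averaging estimate with the invariance/ultimate-boundedness argument uniformly in $\varepsilon\in(0,\bar\varepsilon]$, and track carefully how the averaging error propagates into the exponential (respectively algebraic) prefactor in order to conclude that $c_m\to1$. By contrast, identifying the LBS and solving the scalar comparison inequality are routine.
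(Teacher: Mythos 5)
Your argument is essentially correct as a route to the qualitative conclusions, but it is not the route the paper (or its source) takes: the paper gives no proof of Lemma~2 at all, citing \cite{GZE18}, and the method there --- echoed verbatim in this paper's proof of Theorem~1 --- is a \emph{direct} one: expand the solution of~\eqref{int} over a single period $[t_0,t_0+\varepsilon]$ via the Chen--Fliess/Volterra series, obtain a discrete-time Lyapunov inequality of the form $\|x(t_0+\varepsilon)-x^*\|^2\le\|x(t_0)-x^*\|^2(1-2\varepsilon\gamma_m)+O(\varepsilon^{1+1/m})$ (resp.\ with $\gamma_m$ state-dependent for $m>2$), and iterate over periods. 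You instead analyze the limiting gradient flow $\dot{\bar x}=-\nabla J(\bar x)$ with $V=J-J^*$ and then lift via finite-horizon closeness of trajectories. Your scalar comparison computations are correct (including the exponents and the constants $c_m=(\alpha_1/\alpha_2)^{(m-2)/m}$, $\tilde c_m=\alpha_1^{(m-2)/m}\frac{m-2}{m}\beta_1^2$), and your patching of the finite-horizon estimate with ultimate boundedness for $t\ge T_\rho$ is the standard and legitimate way to handle the deterioration of the averaging error over long horizons --- you correctly identify this as the delicate step, and it is precisely the step the direct one-period iteration avoids, since the latter never needs a trajectory-closeness bound that is uniform over a $\rho$-dependent horizon.

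Two caveats on the fine print of the statement. First, Lemma~1 as stated is purely qualitative, so you cannot get the quantitative decay rates from it; you must invoke the underlying finite-horizon estimates, as you do, but then the constant you obtain is $c_m=(\alpha_2/\alpha_1)^{1/m}$ (for $m=2$, $(\alpha_2/\alpha_1)^{1/2}$), which is a fixed geometric constant and does \emph{not} tend to $1$ as $\varepsilon\to0$; the lemma's claim that ``$c_m$ can be made arbitrarily close to $1$ by choosing a sufficiently small $\varepsilon$'' is tied to the direct approach, which works with $\|x-x^*\|^2$ rather than $J-J^*$ and so picks up only a $1+O(\varepsilon)$ prefactor. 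Shrinking the neighborhood, as you suggest, is not what the statement asserts. Second, your identification $\kappa_1=\beta_1^2$, $\alpha=1/m$ is a plausible reading of the (undefined in this paper) constants in $\bar\lambda\in(0,\alpha\kappa_1)$, but it is a guess; this does not affect correctness of the argument.
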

More technical details on the above decay rate estimates can be found in~\cite{GZE18}.
 For the sake of clarity, we have assumed here $x\in\mathbb R$, however, the above result can be easily extended to the case $x\in\mathbb R^n$~(see, e.g.,~\cite{DurrAuto,GZE18}).

\subsection{Main idea}

As follows from Lemma~\ref{lem_decay}, the Lie bracket approximation approach provides a constructive solution to the extremum seeking problem which ensures the exponential convergence of the trajectories of system~\eqref{int} to the optimal point in the case of a quadratic-like cost function, i.e., for $m=2$ in Assumption A2). However, for cases $m>2$, the above algorithm ensures only a polynomial decay rate. This can also be observed by analyzing the first-order Lie bracket system associated with system~\eqref{int}:
$
\dot{\bar x}=-\nabla J(\bar x).
$
Note that the above first-order LBS is a complete average asymptote for \eqref{int}, meaning that higher-order LBS approximations will be redundant \cite{PE23}. Now, if for example $J=\tfrac{1}{2}(x-x^*)^{2}$, then the Lie bracket system is linear, $\dot{\bar x}=-(\bar x-x^*) $, and thus $x^*$ is its exponentially stable equilibrium point. In case $J=\tfrac{1}{4}(x-x^*)^{4}$, the Lie bracket system takes the form $\dot{\bar x}=-(\bar x-x^*)^3$, and its solutions are well-known to exhibit the polynomial decay rate $O(t^{-1/2})$ as $t\to\infty$~\cite{GZ13,G16}.  Assume now that we can associate the properties of system~\eqref{eqn:controlAffineIntro} with a system which gives access to the third-order derivative of $J$, namely, with the system 
$\dot{\tilde x}=-J^{(3)}(\tilde x)=-6(\tilde x-x^*)$. Then the latter system turns out to be linear again, which, under certain assumptions, may imply the practical exponential stability of the extremum seeking system. A natural way of accessing higher order derivatives of the cost function is to excite the Lie brackets of corresponding order~\cite{Labar19,PE23}. For example, it is easy to see that
$
[[[J,1],1],1](x)=-J^{(3)}(x).
$
The main idea of this paper is to construct an extremum seeking system %in the form
\begin{equation}
    \label{int_gen}
    \dot x=\sum_{k=1}^{n_u} g_k(J(x))u_k^\varepsilon(t),
\end{equation}
so that, under a special choice of  vector fields $g_k$, %$k=1,\dots, n_u\in\mathbb N$,
dither signals $u_k^\varepsilon{\in }L^\infty_{[0,\varepsilon]}$, and an $\varepsilon{>}0$,
 its trajectories approximate the trajectories of a system with high order Lie brackets:
 \begin{equation}
     \label{Lie_gen}
     \dot{\bar x}=\sum_{\ell=1}^N\sum_{I_\ell\in S_\ell}c_{I_\ell}g_{I_\ell}(\bar x),
 \end{equation}
 where  $N\in\mathbb N$,  $S_\ell\subset\{1,\dots,n_u\}^\ell$ denotes the sets of multi-indices of the Lie brackets required for solving theES problem, $g_{I_\ell}$ are the corresponding Lie brackets, and $c_{I_\ell}$ are constant parameters. For example, for system~\eqref{eqn:LBS_main} with $r=2$, $f_0=0$, $f_i=g_i$, $i\in\{1,\dots,n\}$, we mean $N=2$, $S_1=\emptyset$,   $S_2=\{(i,j):1\le i<j\le n\}$, $I_2:=(i,j)\in S_2$ $c_{I_2}=\nu_{ij}$.
 
One of the main tools exploited in this paper is the Chen--Fliess series expansion~\cite{La95,GZ23}: under certain regularity assumptions on the control vector fields of system~\eqref{int_gen}, the solutions of~\eqref{int_gen} with $x(0)=x^0$ can be represented as
\vspace{-0.25em}
{\small \begin{equation}
    \label{series_gen}
    \begin{aligned}
       &x(t)=x^0+\sum_{\ell=1}^N\sum_{k_1,\dots, k_\ell=1}^{n_u}L_{g_{k_\ell}}\dots L_{g_{k_2}}f_{g_1}(x^0) \\
       &\times\int _0^t\int _0^{s_1}... \int _0^{s_{\ell-1}}u_{k_1}^\varepsilon(s_1) u_{k_\ell}^\varepsilon(s_\ell) ds_\ell... ds_1 + R(t),
    \end{aligned}
\end{equation}}
with the remainder
{\small $$
\begin{aligned}
      R(t)=&\hspace{-1em}\sum_{k_1,\dots, k_{N+1}=1}^{n_u}\int _0^t\int _0^{s_1}... \int _0^{s_{N}}L_{g_{k_{N+1}}}\dots L_{g_{k_2}}g_{k_1}(x(s_{N+1})) \\
       &\times u_{k_1}^\varepsilon(s_1)\times \dots  \times u_{k_{N+1}}^\varepsilon(s_{N+1}) ds_{N+1}... ds_1.
    \end{aligned}
$$}
 
\section{Main results}
\subsection{Two-input extremum seeking system}

To simplify the presentation, in this section, we assume $x\in \mathbb R$. To steer the solutions of an extremum seeking system towards the direction of high-order Lie brackets, we refer to control approaches from nonholonomic systems theory~\cite{Sus91,Liu97,ZuSIAM,Gau14,GZ23,GZ24_CDC}. We focus here on the two-input systems of form~\eqref{int_gen}: 
\begin{equation}\label{int_2}
\dot x=g_1(J(x))u_{1}^\varepsilon(t)+g_{2}(J(x)) u_{2}^\varepsilon(t).
\end{equation}
Suppose also  that $u_1^\varepsilon(t)=\varepsilon^{1/N-1}v_1(t/\varepsilon)$, $u_2^\varepsilon(t)={\varepsilon^{1/N-1}}v_2(t/\varepsilon)$, with the dithers $v_1(t/\varepsilon),v_2(t/\varepsilon)$ 
exciting the Lie bracket 
$
g_{I_N}(z)=\big[\big[\dots[g_1,\underbrace{g_2],g_2\big],\dots,g_2}_{N-1\text{ times}}]\dots]\big](z)
$
at time $t=\varepsilon$, with $I_N=(1,\underbrace{2,\dots,2}_{N-1\text{ times}})$, in the sense that the Chen--Fliess series expansion~\eqref{series_gen}  takes the form
\begin{equation}
    \label{series_2}
    x(\varepsilon)=x^0+\varepsilon g_{I_N}(J(x^0))+R(\varepsilon),
\end{equation}
and all the other Lie brackets of length from 1 to $N$ do not appear in the above expansion. 
One way to construct such inputs is described in~\cite{Gau14}, other approaches can be found in, e.g.,~\cite{Sus91,Liu97,PE23}. The concrete examples are as follows, as mentioned in~\cite{GZ24_CDC}.
 \begin{statement} 
 \emph{   \begin{itemize}
    \item the inputs
    $v_1(t/\varepsilon)=2\sqrt{\kappa_{12}\pi}\cos\left({2\kappa_{12}\pi t}/{\varepsilon}\right)$, \\
    $v_2(t/\varepsilon)=2\sqrt{{\kappa_{12}\pi}}\sin\left({2\kappa_{12}\pi t}/{\varepsilon}\right)$, $\kappa_{12}\in\mathbb Z$,\\
    excite the first order Lie bracket $[g_1,g_2]$;
    \item the inputs 
    $v_1(t/\varepsilon)=-2\left({4\kappa_{122}\pi }\right)^{\frac23}\cos\left({4\kappa_{122}\pi t}/{\varepsilon}\right)$,\\
      $v_2(t/\varepsilon)=\left({4\kappa_{122}\pi }\right)^{\frac23} \cos\left({2\kappa_{122}\pi t}/{\varepsilon}\right)$,\\ $\kappa_{112}\in\mathbb Z$,
         excite the second order Lie bracket $[[g_1,g_2],g_2]$;
      \item  the inputs  
      $v_1(t/\varepsilon)=6\left(2\kappa_{1222}\pi\right)^{\frac34}\sin\left({6\kappa_{1222}\pi t}/{\varepsilon}\right)$,\\
      $v_2(t/\varepsilon)= 2\left(2\kappa_{1222}\pi\right)^{\frac34}\cos\left({2\kappa_{1222}\pi t}/{\varepsilon}\right)$, \\$\kappa_{1222}\in\mathbb Z$,
         excite the third order Lie bracket $\big[[[g_1,g_2],g_2],g_2\big]$.
\end{itemize}}
\end{statement}

Assume further that $g_1,g_2$ are chosen in such a way that the Lie bracket $g_{I_N}(J(x))$ has the form
\begin{equation}\label{g_choice}
g_{I_N}(J(x))=-c_NJ^{(N-1)}(x),
\end{equation}
 and some $c_N>0$ playing a role of control gain parameter. 
In this context, $g_{I_N}(J(x))$ denotes the corresponding Lie bracket computed with the compositions of functions $g_1\circ J$, $g_2\circ J$.
The most obvious choice of the vector fields satisfying relation~\eqref{g_choice} is $g_1(z)=(-1)^{N+1}z$, $g_2(z)\equiv 1$. For the first-order Lie brackets, the whole family of functions $g_1,g_2$ satisfying the relation
\begin{equation}
    \label{g_choice_2}
    [g_1\circ J,g_2\circ J](x)=-\varphi(J(x))\nabla J(x),
\end{equation}
with any given continuous  function $\varphi$,
has been introduced in~\cite{GZE18}.  Then the expansion~\eqref{series_2} takes the form
\begin{equation}
    \label{series_2eps}
    x(\varepsilon)=x^0-\varepsilon c_NJ^{(N-1)}(x^0)+R(\varepsilon), 
\end{equation}
and, similarly to the approach of~\cite{GZE18}, the practical asymptotic stability of $x^*$ for system~\eqref{int_2} can be proved. We proceed by summarizing the key results of this subsection and integrating them into  the context of solving the extremum seeking problem. For this purpose, we further  specify the properties of  $J$ as a polynomial-like single-variable function:
\begin{assumption}
    The function $J\in C^m(D,\mathbb R^{m})$ with some $m\ge 2$, and
%there exist constants $c_{J1\ell},c_{J2\ell}$, $\ell=0,1,\dots,m$ such that, for all $x\in D$,
there exist constants  $\alpha_{1},\alpha_{2},\beta_{1},\beta_{2}$, such that, for all $x\in D$,
     $$ 
     \begin{aligned}
    \alpha_{1}\|x-x^*\|^{m} \le  J(x)-J^* & \le \alpha_{2}\|x-x^* \|^{m},\\
    J^{(m-1)}(x)(x-x^*)&\ge\beta_{1}\|x^0-x^*\|^2,\\  \|J^{(m-1)}(x)\|&\le \beta_{2}\|x^0-x^*\|.
    \end{aligned}
    $$
\end{assumption}

\begin{theorem}
    \label{thm_2inp}
\textit{Given system~\eqref{int_2} with $N=m$ and a cost function $J$ satisfying Assumption~2, let the vector fields $g_1\circ J,g_2\circ J\in C^m(D;\mathbb R^n)$ satisfy the relation~\eqref{g_choice} with $I_m=(1,{2,\dots,2})$, and let $u_1^\varepsilon,u_2^\varepsilon\in L^\infty_{[0,\varepsilon]}$ be $\varepsilon$-periodic dithers which ensure the representation~\eqref{series_2}. Then the point $x^*$ is semi-globally practically exponentially stable for system~\eqref{int_2}.}
\end{theorem}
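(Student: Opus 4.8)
\emph{Proof idea.}
The plan is to reduce the analysis of~\eqref{int_2} to its one-period map, which by hypothesis is given by the Chen--Fliess expansion~\eqref{series_2eps}, namely $x(\varepsilon)=x^0-\varepsilon c_m J^{(m-1)}(x^0)+R(\varepsilon)$, and then to propagate the resulting estimate over consecutive periods via a discrete-time Lyapunov argument. The relevant comparison system is the high-order gradient-like flow $\dot{\bar x}=-c_m J^{(m-1)}(\bar x)$, whose right-hand side is, by~\eqref{g_choice}, precisely the Lie bracket $g_{I_m}(J(\cdot))$ excited by the chosen dithers; the argument parallels the one behind Lemma~\ref{lem_decay} of~\cite{GZE18}, with $\nabla J$ replaced by $J^{(m-1)}$.

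\emph{Step 1: stability of the comparison flow.}
With $V(x)=\tfrac12\|x-x^*\|^2$ and the lower bound on $J^{(m-1)}(x)(x-x^*)$ from Assumption~2, one gets $\dot V=-c_m\,(x-x^*)J^{(m-1)}(x)\le-2c_m\beta_1 V$ on $D$, so $x^*$ is exponentially stable for $\dot{\bar x}=-c_m J^{(m-1)}(\bar x)$ on all of $D$ with rate $c_m\beta_1$; the matching upper bound $\|J^{(m-1)}(x)\|\le\beta_2\|x-x^*\|$ gives linear growth of the field and hence forward completeness of that flow.

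\emph{Step 2: the one-period estimate.}
Since $u_i^\varepsilon(t)=\varepsilon^{1/m-1}v_i(t/\varepsilon)$, an $\ell$-fold iterated integral of a product of $\ell$ such inputs over $[0,\varepsilon]$ is $O(\varepsilon^{\ell/m})$; the dither designs of Statement~1, together with the coefficient cancellations noted in Remark~1, annihilate all iterated integrals attached to brackets of length $\ell<m$ and normalize the length-$m$ term, which is exactly what yields~\eqref{series_2} and hence~\eqref{series_2eps} with a remainder of order $\ell=m+1$, i.e.\ $\|R(\varepsilon)\|\le C_R\,\varepsilon^{1+1/m}=o(\varepsilon)$ uniformly for $x^0\in\overline{B_\delta(x^*)}$, with $C_R$ depending only on bounds of $g_1\circ J,g_2\circ J$ and finitely many of their Lie derivatives on that compact set. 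Because $\|\dot x\|=O(\varepsilon^{1/m-1})$, the trajectory moves only $O(\varepsilon^{1/m})$ within one period, so for $\varepsilon$ small the solution stays in $D$ and~\eqref{series_2eps} is valid on every period $[n\varepsilon,(n+1)\varepsilon]$. This is the delicate part of the proof: making the order estimates uniform over the compact set, checking that the dither choices genuinely \emph{isolate} the length-$m$ bracket, and controlling $R(\varepsilon)$ on the infinite horizon rather than on a fixed time interval; the non-standard $\varepsilon^{1/m-1}$ amplitude scaling, which makes the excited bracket enter at order $\varepsilon$ while keeping $R=o(\varepsilon)$, is what must be handled with care.

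\emph{Step 3: propagation and conclusion.}
Put $x_n=x(n\varepsilon)$. From~\eqref{series_2eps}, $V(x_{n+1})-V(x_n)=\langle x_n-x^*,x_{n+1}-x_n\rangle+\tfrac12\|x_{n+1}-x_n\|^2\le-\varepsilon c_m\beta_1\|x_n-x^*\|^2+C_R\varepsilon^{1+1/m}\|x_n-x^*\|+O(\varepsilon^2)\|x_n-x^*\|^2+O(\varepsilon^{2+2/m})$; absorbing the cross term by Young's inequality gives, for $\varepsilon$ small, $V(x_{n+1})\le(1-\tfrac12\varepsilon c_m\beta_1)V(x_n)+C\varepsilon^{1+2/m}$. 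Iterating this scalar recursion, taking square roots, setting $t=n\varepsilon$, and adding the $O(\varepsilon^{1/m})$ inter-sample deviation yields $\|x(t)-x^*\|\le c_m\|x^0-x^*\|e^{-\lambda t}+\rho$ with $\lambda$ a positive constant proportional to $c_m\beta_1$ and $\rho=O(\varepsilon^{1/m})\to0$ as $\varepsilon\to0$; practical uniform stability follows from the same inequality, since $V$ is non-increasing up to the $C\varepsilon^{1+2/m}$ term, so trajectories started in a suitable $B_\delta(x^*)$ remain in $B_\rho(x^*)$. As $\delta$ is arbitrary subject only to $\overline{B_\delta(x^*)}\subset D$ and $\rho$ vanishes with $\varepsilon$, this is exactly semi-global practical exponential stability of $x^*$ for~\eqref{int_2}; the same argument applies verbatim for $x\in\mathbb R^n$ using the matrix form of Assumption~2. $\hfill\square$
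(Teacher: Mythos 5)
Your proposal is correct and follows essentially the same route as the paper's proof: both reduce the analysis to the one-period Chen--Fliess map~\eqref{series_2eps}, bound the remainder by $C_R\varepsilon^{1+1/m}$ using uniform bounds on the iterated Lie derivatives over a compact set $D'\supset\overline{B_\delta(x^*)}$, and then derive a contraction of $\|x(n\varepsilon)-x^*\|^2$ of the form $(1-2\varepsilon\gamma_m)V+O(\varepsilon^{1+1/m})$ which is iterated to give the exponential decay to an $\varepsilon$-dependent neighborhood. The only cosmetic differences are your use of Young's inequality to absorb the cross term (the paper instead bounds $\|x^0-x^*\|\le\delta$ inside the additive constant $\sigma$) and your explicit Step 1 on the comparison flow, which the paper leaves implicit.
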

\begin{proof}
  The argumentation is similar to  the proof of the practical exponential stability in~\cite[Theorem~3]{GZE18}, so we only explain here how to derive exponential decay rate to a neighborhood of $x^*$ based on the representation~\eqref{series_2}.   Given any $\delta>0$, let $D'$ be any compact set such that $\overline{B_\delta(x^*)}\subset D'\subset D$. Similarly to the proof of~\cite[Theorem~3]{GZE18}, one can show that there exists an $\varepsilon_0>0$ such that solutions of system~\eqref{int_2} with $x^0\in B_\delta(x^*)$ and $\varepsilon\in(0,\varepsilon_0)$ are well-defined in $D'$ for all $t\in[0,\varepsilon]$. To estimate the remainder in~\eqref{series_2eps}, denote 
  $c_{Fm}=\sup _{x\in D'}\sum_{k_1,\dots, k_{m+1}=1}^{2}L_{g_{k_{m}}}\dots L_{g_{k_2}}g_{k_1}\circ J(x),$ $c_u=\max _{k=1,2}\sup _{t\in[0,\varepsilon]}|v_k(t/\varepsilon)|$. Then it is easy to see that $\|R(\varepsilon)\|\le c_R\varepsilon^{1+\frac1m}$ with
%{\small   $$ \begin{aligned}
%\|R(\varepsilon)\|=&
%\\
%\hspace{-1em}\sum_{k_1,\dots, k_{m+1}=1}^{2}&\int _0^t\int _0^{s_1}... \int _0^{s_{m}}L_{g_{k_{m+1}}}\dots L_{g_{k_2}}g_{k_1}\circ J(x(s_{m+1})) \\
%       &\times u_{k_1}^\varepsilon(s_1)\times \dots  \times u_{k_{m+1}}^\varepsilon(s_{m}) ds_{m}... ds_1\\
%       \le & c_R\varepsilon^{1+\frac1m},
%    \end{aligned}
%  $$}
   $c_R= c_u^{m+1}c_{Fm}$.
The above estimate together with the representation~\eqref{series_2eps}, Assumption~2, and triangular inequality, implies that
$$
\begin{aligned}
 \|x(\varepsilon)-x^*\|^2\le &\|x^0-x^*\|^2(1-2\varepsilon \gamma_m)+\varepsilon^{1+\frac1m}\sigma,\\
% &+\varepsilon^2c_m^2\beta_{m2}^2\|x^0-x^*\|^2+c_R^2\varepsilon^{2+\frac2m}\\
% &+2c_R\varepsilon^{1+\frac1m}\|x^0-x^*\|+2\varepsilon^{2+\frac{1}{m}} c_mc_R \beta_{2m}\|x^0-x^*\|, \\
\end{aligned}
$$
with $\gamma_m=c_m(\beta_{1}-\varepsilon c_m \beta_{2}^2)$, $\sigma= c_R^2\varepsilon^{1+\frac1m}+2c_R\delta(1+\varepsilon  c_m\beta_{2})$. Then following the argumentation of~\cite[Steps 3.I-4.I]{GZE18}, we can establish the practical exponential decay rate to an arbitrary small neighborhood of $x^*$.
\end{proof}
\begin{remark}
As in the paper~\cite{GZE18}, it is also possible to relax regularity assumption on the control vector fields. Namely, requirement of  $g_k\circ J$, $k=1,2$, being $m$ times continuously differentiable in $D$ can be replaced with the following: $g_k\circ J\in C^m(D\setminus\{x^*\});\mathbb R$ and  $L_{g_{k_{N+1}}}\dots L_{g_{k_2}}g_{k_1}\circ J\in C(D;\mathbb R)$ for all $N\in\{1,\dots,m\}$, $k_1,\dots,k_{m+1}\in\{1,2\}$. This relaxation is particularly important for deriving conditions for the ``classical'' exponential stability in the sense of Lyapunov, meaning that the trajectories of the extremum seeking system converge to the point
 $x^*$, rather than merely to its neighborhood. Another important condition for achieving classical exponential stability is the property of vanishing amplitudes, which requires that $g_k\circ J\to 0$  as $x\to x^*$ (see~\cite[Theorem~3, Part II]{GZE18}). Since selecting  such vector fields becomes increasingly challenging in the case of higher-order Lie brackets, we leave this task, along with a rigorous formulation of the corresponding exponential stability properties, for further research.
\end{remark}
\subsection{Alternative design approaches}     
In the previous subsection, we have described the approach for generating extremum seeking systems with two inputs (dithers). However, this method for realizing the dynamics similar to $\dot{\bar x}=-J^{(m-1)}$ is clearly not unique. For example, an alternative way to excite a Lie bracket of length 2 is to introduce a three-input strategy
\begin{equation}\label{int_3}
\dot x=g_1(J(x))u_{1}^\varepsilon(t)+g_{2}(J(x)) u_{2}^\varepsilon(t)+g_{3}(J(x)) u_{3}^\varepsilon(t),
\end{equation}
 with $u_k^\varepsilon(t)=\varepsilon^{-3/4}v_k^\varepsilon(t)$, $k=1,2,3$,  $v_k(t/\varepsilon)$ 
exciting the Lie bracket $g_{(1,2,3)}=[[g_1,g_2],g_3]$, and $g_1,g_2,g_3$ such that
\begin{equation}
    \label{g_choice_3}
    \big[[g_1\circ J,g_2\circ J],g_3\circ J\big](x)=-c_{(1,2,3)}J^{(2)}(x)
\end{equation}
with some $c_{(1,2,3)}>0$. The reason for introducing more inputs in an extremum seeking system is to gain more flexibility in selecting the control vector fields. 
As in previous subsection, formula~\eqref{g_choice_3} holds for $g_1(z)=-z$, $g_2(z)=1$, $g_3(z)=1$. The three-input structure, however, facilitates a more general description of the entire class of functions  $g_1,g_2,g_3$ satisfying~\eqref{g_choice_3}. Namely, the paper~\cite{GZE18} provides a general formula for deriving $g_1,g_2$ such that the relation~\eqref{g_choice_2} is satisfied with an arbitrary  $\varphi:\mathbb R\to\mathbb R$. Then straightforward computations yields the following result.
\begin{lemma}
\emph{Let $\varphi_2\in C^1(\mathbb R;\mathbb R)$ be any given function,   the functions $g_1,g_2\in C^1(\mathbb R;\mathbb R)$ satisfy~\eqref{g_choice_2} with   $\varphi=\varphi_2$, and let $g_3=-\varphi_2$. Then, for all $x\in D$,
$$
\big[[g_1\circ J,g_2\circ J],g_3\circ J\big](x)=-\varphi_2^2((J(x)))J^{(2)}(x).
$$}
\end{lemma}
%\begin{proof}
 %   Indeed, under the above conditions,
 %   $$
 %   \begin{aligned}
%\big[&[g_1(J(x)),g_2(J(x))],g_3(J(x))\big]\\
%&=\big[\varphi_2(J(x))\nabla J(x),\varphi_2(J(x))\big](x)=-\varphi_2^2(J(x))J^{(2)}(x).
%    \end{aligned}
%    $$
%\end{proof}
In particular, by setting $\varphi_2(z)\equiv \sqrt{c_{(1,2,3)}}$ in the above Lemma, we  directly obtain formula~\eqref{g_choice_3}. 

As noted in Remark~1, the regularity assumptions on $g_1,g_2,g_3$ can be relaxed, and with appropriate selections, it is possible to achieve exponential stability in the  sense of Lyapunov. A formal statement of this result is left for future work. %Several possible choices for $g_1,g_2,g_3$ are considered in Section~IV.

\begin{remark}
    With the exception of special cases (e.g., Newton-based ES as in \cite{Labar19}), in general, the even-order derivatives are rather not helpful for classic extremum seeking problems. 
    % However, they find applications in ..... . 
The goal of considering  this case here is to illustrate alternative approaches to designing extremum seeking systems, both in terms of selecting control vector fields and determining the number of dithers.  The formula presented in Lemma~3 can be easily extended to higher-order scenarios. For example, given a four-input system~\eqref{int_gen} with $n_u=4$ and any   function $\varphi_3\in C^1(\mathbb R;\mathbb R^+)$ such that $\sqrt{\varphi_3}\in C^1(\mathbb R^+;\mathbb R^+)$, let the functions $g_1,g_2,g_3$ be  chosen as in Lemma~3 with $\varphi_2=\sqrt{\varphi_3}$. Then $\big[\big[[g_1\circ J,g_2\circ J],g_3\circ J\big],g_4\circ J\big](x)=-\varphi_3^2J^{(3)}(x)$.
\end{remark}
To conclude this subsection, we provide a generalized formulation of Theorem~\ref{thm_2inp}.
\begin{theorem}
    \label{thm_minp}
\textit{Given system~\eqref{int_2} with $N=m$ and a cost function $J$ satisfying Assumption~2, let the vector fields $g_1\circ J,\dots,g_{m}\circ J\in C^m(D;\mathbb R^n)$ satisfy the relation~\eqref{g_choice} with $I_m=(i_1,\dots,i_m)\in\{1,\dots,m\}^m$, and let $u_1^\varepsilon,\dots,u_m^\varepsilon\in L^\infty_{[0,\varepsilon]}$ be $\varepsilon$-periodic dithers which ensure the representation~\eqref{series_2}. Then the point $x^*$ is semi-globally practically exponentially stable for system~\eqref{int_2}.}
\end{theorem}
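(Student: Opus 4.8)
The plan is to reproduce, essentially line for line, the argument behind Theorem~\ref{thm_2inp} (equivalently \cite[Theorem~3]{GZE18}), and merely to check that nothing breaks when the number of dithers grows from two to $m$ and the exciting multi-index is a general element $I_m=(i_1,\dots,i_m)\in\{1,\dots,m\}^m$ rather than $(1,2,\dots,2)$. First I would fix $\delta>0$, pick a compact set $D'$ with $\overline{B_\delta(x^*)}\subset D'\subset D$, and observe that each $g_k\circ J\in C^m(D';\mathbb R)$ while each dither $u_k^\varepsilon(t)=\varepsilon^{1/m-1}v_k(t/\varepsilon)$ has amplitude $\sup_{t}|v_k(t/\varepsilon)|=\|v_k\|_{L^\infty}$ independent of $\varepsilon$; a standard a priori estimate then produces an $\varepsilon_0>0$ such that every solution of the $m$-input system of the form~\eqref{int_gen} with $n_u=m$ starting in $B_\delta(x^*)$ remains in $D'$ on $[0,\varepsilon]$ for all $\varepsilon\in(0,\varepsilon_0)$.

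Next I would expand the time-$\varepsilon$ map via the Chen--Fliess series~\eqref{series_gen} with $n_u=m$ and truncation order $N=m$. By hypothesis the dithers are chosen so that~\eqref{series_2} holds, i.e.\ all iterated integrals of length $1$ through $m$ vanish except the one multiplying $g_{I_m}$; together with~\eqref{g_choice} this gives the representation~\eqref{series_2eps} with $N=m$, namely $x(\varepsilon)=x^0-\varepsilon c_m J^{(m-1)}(x^0)+R(\varepsilon)$. The remainder $R(\varepsilon)$ is the length-$(m+1)$ tail of~\eqref{series_gen}: bounding the (now $m^{m+1}$, still finitely many) iterated Lie derivatives on $D'$ by a constant $c_{Fm}$, each dither by $c_u=\max_k\|v_k\|_{L^\infty}$, the $(m+1)$-fold simplex integral by $\varepsilon^{m+1}$, and using the amplitude scaling $\varepsilon^{(m+1)(1/m-1)}$, yields $\|R(\varepsilon)\|\le c_R\varepsilon^{1+1/m}$ with $c_R=c_u^{m+1}c_{Fm}$. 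This is the only point where the combinatorics change relative to Theorem~\ref{thm_2inp}: the index set is larger but remains finite, and the $\varepsilon$-order of the remainder is unaffected.

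I would then square the one-period displacement and invoke Assumption~2. With $e_0=x^0-x^*$,
\[
\|x(\varepsilon)-x^*\|^2=\|e_0\|^2-2\varepsilon c_m J^{(m-1)}(x^0)e_0+\varepsilon^2 c_m^2\|J^{(m-1)}(x^0)\|^2+2\big(e_0-\varepsilon c_m J^{(m-1)}(x^0)\big)R(\varepsilon)+\|R(\varepsilon)\|^2,
\]
and the lower and upper bounds on $J^{(m-1)}$ in Assumption~2, together with $\|R(\varepsilon)\|\le c_R\varepsilon^{1+1/m}$ and $\|e_0\|\le\delta$, give
\[
\|x(\varepsilon)-x^*\|^2\le\|e_0\|^2(1-2\varepsilon\gamma_m)+\varepsilon^{1+1/m}\sigma,
\]
with $\gamma_m=c_m(\beta_1-\varepsilon c_m\beta_2^2)$ and $\sigma=c_R^2\varepsilon^{1+1/m}+2c_R\delta(1+\varepsilon c_m\beta_2)$, exactly as in Theorem~\ref{thm_2inp}. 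For $\varepsilon$ small one has $\gamma_m>0$; iterating this contraction over consecutive intervals of length $\varepsilon$ and running the bookkeeping of \cite[Steps~3.I--4.I]{GZE18} gives exponential decay of $\|x(t)-x^*\|$ to an arbitrarily small ball around $x^*$. Since $D'$ may be taken to be an arbitrarily large compact subset of $D$, attractivity holds from arbitrarily large initial data, so the stability is semi-global.

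The routine Lyapunov iteration is not where the difficulty lies; \textbf{the hard part} is supplying the two design ingredients the estimate above takes for granted in the general $m$-input, general-$I_m$ setting: (i) the existence of $\varepsilon$-periodic dithers $u_1^\varepsilon,\dots,u_m^\varepsilon$, bounded uniformly in $\varepsilon$, that isolate an arbitrarily ordered length-$m$ bracket $g_{I_m}$ in~\eqref{series_gen} with unit coefficient while annihilating all shorter brackets, for which one invokes the bracket-generating constructions of \cite{Sus91,Liu97,Gau14,Gau15,PE23}; and (ii) the existence of $g_1,\dots,g_m$ realizing~\eqref{g_choice}, i.e.\ $g_{I_m}\circ J=-c_m J^{(m-1)}$, where the extra inputs (relative to Theorem~\ref{thm_2inp}) provide the added freedom used in Lemma~3 and Remark~3, the canonical choice being $g_1(z)=(-1)^{m+1}z$ and $g_k(z)\equiv1$ for $k\ge2$, reordered to match the prescribed $I_m$. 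Once (i)--(ii) are available, the displacement estimate closes the proof.
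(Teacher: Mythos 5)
Your proposal is correct and follows essentially the same route as the paper, which for this theorem simply states that the argument repeats the proof of Theorem~1: compact-set containment, the Chen--Fliess expansion truncated at order $m$, the remainder bound $\|R(\varepsilon)\|\le c_R\varepsilon^{1+1/m}$, the squared-displacement contraction with the same $\gamma_m$ and $\sigma$, and the iteration of \cite[Steps~3.I--4.I]{GZE18}. Note only that the two items you single out as ``the hard part'' (existence of bracket-isolating dithers and of vector fields realizing~\eqref{g_choice}) are hypotheses of the theorem rather than proof obligations, so they need not be discharged here.
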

The proof mostly repeats the proof of Theorem~\ref{thm_2inp}, additional details will be provided in the extended version of the paper.

\subsection{Polynomial-like cost functions with unknown degree}
% \begin{figure*}[thpt]
%      \centering   \includegraphics[width=0.49\linewidth]{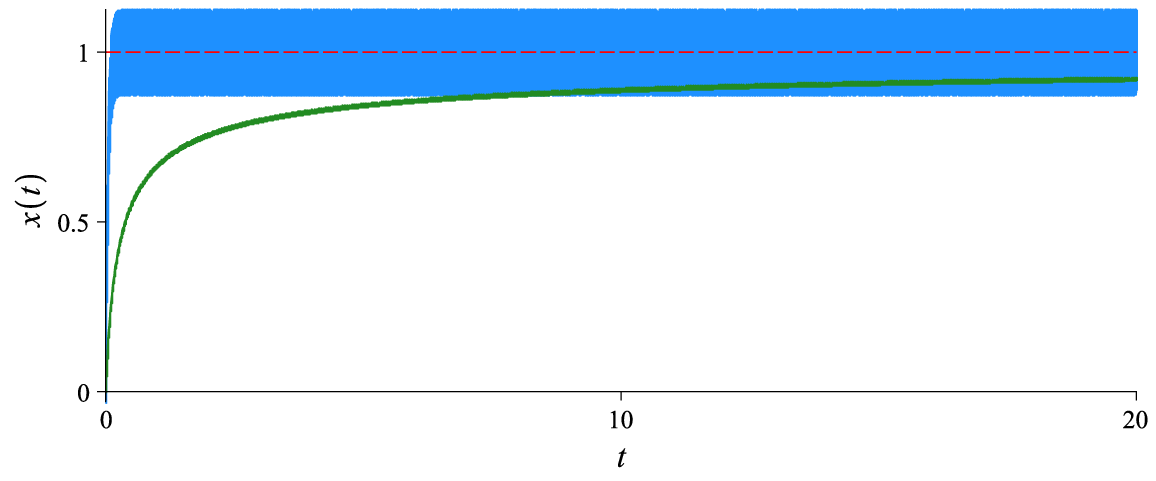}\hfill	\includegraphics[width=0.49\linewidth]{pol4tv.eps}
% 	\caption{ {Green: the trajectories of  systems~\eqref{ex1_Durr}  (left) and~\eqref{ex1_Dur_tv}} (right); blue: the trajectories of  systems~\eqref{ex1_we}  (left) and~\eqref{ex1_we_tv}} (right). Here $J(x)=J_1(x)$, $\varepsilon=10^{-4}$, $x(0)=0$.  \label{fig_pol4}
% \end{figure*}

Ensuring dynamics of the form \( \dot{\bar{x}} = -J^{(N-1)} \) is particularly useful when the degree \( m \) characterizing the behavior of the cost function is known. However, in extremum seeking problems, only very limited information about the cost function is typically available. In such cases, an incorrect choice of the order \( N \) may lead to undesirable system behavior, making it impossible to steer system to the desired state.
For example, consider the cost function \( J(x) = \frac{1}{2}(x - x^*)^2 \), and assume that inputs exciting the third-order Lie bracket are used in~\eqref{int_2}. The resulting third-order Lie bracket system then takes the form \( \dot{\bar{x}} = 0 \), which loses the desired stability properties. Correspondingly, the expansion~\eqref{series_2eps} becomes 
$x(\varepsilon) = x^0 + R(\varepsilon),$
which also demonstrates the ineffectiveness of such control in solving the extremum seeking problem in this case.

To overcome this limitation, we employ a splitting of the time-varying dithers, where different dithers are assigned to excite Lie brackets of different orders. A clever choice of dither's frequencies allows  to obtain an associated Lie bracket system of the form
$
\dot{\bar x}=-\sum_{j=1}^{N}\gamma_jJ^{(j)}(\bar x)\text{ with some } \gamma_j\ge0,
$
which, under appropriate  assumptions on $J$, possesses local exponential stability in a neighborhood of $x^*$. 
To illustrate the method clearly and stay within page limits, we excite each Lie bracket with two corresponding inputs, and restrict our study to $ N = 3 $.
\begin{assumption}
     The function $J\in C^m(D,\mathbb R^{m})$ with some $m\ge 2$, and
%there exist constants $c_{J1\ell},c_{J2\ell}$, $\ell=0,1,\dots,m$ such that, for all $x\in D$,
there exist constants  $\alpha_{1},\alpha_{2}>0$,  $\beta_{\ell 1},\beta_{\ell 2}\ge0$, $\ell=1,2$, such that, for all $x\in D$,
     $$ 
     \begin{aligned}
    &\alpha_{m1}\|x-x^*\|^{m} \le  J(x)-J^*  \le \alpha_{m2}\|x-x^* \|^{m},\\
    &J^{(1)}(x)(x-x^*)\ge\beta_{11}\|x^0-x^*\|^{m},\\
  &\|J^{(1)}(x)\|\le \beta_{12}\|x^0-x^*\|^{m-1},\\
   &J^{(3)}(x)(x-x^*)\ge\beta_{21}\|x^0-x^*\|^{m-2},\,  \|J^{(3)}(x)\|\le \beta_{22},%\|x^0-x^*\|^{m-3},
    \end{aligned}
    $$
assuming $\beta_{21}=0$ if $m=2$ and $\beta_{21}\ge 0$ otherwise. 
\end{assumption}
\begin{theorem}
\textit{Given a cost function satisfying Assumption~3,  let the extremum seeking system have the form 
\begin{equation}
    \label{int_many}
    \dot x= \sum_{k=1}^4 g_k(J(x))u_k^\varepsilon(t),
\end{equation}
where 
 $u_1^\varepsilon(t)=\varepsilon^{-1/2}v_1^{12}(t/\varepsilon)$, $u_2^\varepsilon(t)=\varepsilon^{-1/2}v_2^{12}(t/\varepsilon)$, 
% & u_3^\varepsilon(t)=\varepsilon^{-2/3}v_1^{122}(t/\varepsilon), \, u_4^\varepsilon(t)=\varepsilon^{-2/3}v_2^{122}(t/\varepsilon),\\
$ u_3^\varepsilon(t)=\varepsilon^{-3/4}v_1^{1222}(t/\varepsilon)$,  $u_4^\varepsilon(t)=\varepsilon^{-3/4}v_2^{1222}(t/\varepsilon)$,
with the dithers $v_j^{12}, v_j^{1222}$, $j=1,2$, chosen as in Statement~1 under assumption that  there are no resonances of order up to $4$ in each of the following pairs: $(\kappa_{12}, \kappa_{1222})$,  $(\kappa_{12}, 3\kappa_{1222})$, and $(\kappa_{12}, 3\kappa_{1222})$. 
\\
Further, assume that the functions $g_k\circ J\in C^4(D;\mathbb R)$ satisfy the following relations:
$  [g_1\circ J,g_2\circ J](x)=-\gamma_1 J^{(1)}(x)$,
%&   \big[[g_3\circ J,g_4\circ J],g_4\circ J\big](x)=-\gamma_2  J^{(2)}(x),\\
$   \big[\big[[g_3\circ J,g_4\circ J],g_4\circ J\big],g_4\circ J\big](x)=-\gamma_3  J^{(3)}(x)$.
Then the point $x^*$ is practically exponentially stable for system~\eqref{int_many} if $m=2$ or $m=4$, and practically asymptotically stable for system~\eqref{int_many} if $m>4$.}
\end{theorem}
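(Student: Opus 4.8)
The plan is to follow the scheme of Theorem~\ref{thm_2inp} (hence of \cite[Theorem~3]{GZE18}), the only genuinely new feature being that two groups of dithers with different $\varepsilon$-scalings act simultaneously. First I would fix $\delta>0$ with $\overline{B_\delta(x^*)}\subset D$ and a compact set $D'$ with $\overline{B_\delta(x^*)}\subset D'\subset D$, and argue, as in \cite{GZE18}, that there is an $\varepsilon_0>0$ such that for every $x^0\in B_\delta(x^*)$ and $\varepsilon\in(0,\varepsilon_0)$ the solution of \eqref{int_many} is well defined and stays in $D'$ on $[0,\varepsilon]$; this uses boundedness of the $g_k\circ J$ on $D'$ together with the amplitude scalings $u_1^\varepsilon,u_2^\varepsilon=O(\varepsilon^{-1/2})$, $u_3^\varepsilon,u_4^\varepsilon=O(\varepsilon^{-3/4})$, which bound the one-period excursion by $O(\varepsilon^{1/4})$.

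The heart of the proof is the evaluation of the Chen--Fliess expansion \eqref{series_gen} with $N=4$ over one period $[0,\varepsilon]$. After the substitution $s_j=\varepsilon\tau_j$, an $\ell$-fold iterated integral built from $n_1$ factors in the pair $(u_1^\varepsilon,u_2^\varepsilon)$ and $n_2$ factors in $(u_3^\varepsilon,u_4^\varepsilon)$, $n_1+n_2=\ell$, is of order $\varepsilon^{n_1/2+n_2/4}$. The terms with exponent strictly below $1$ must be made to vanish: those with $\ell=1$ do so by the zero-mean property of the dithers; the pure-$(u_3^\varepsilon,u_4^\varepsilon)$ terms of orders $2$ and $3$ (exponents $\tfrac12$ and $\tfrac34$) do so because $v_1^{1222},v_2^{1222}$ are designed to excite only the length-$4$ bracket (Statement~1); the mixed terms of orders $2$ and $3$ do so by the non-resonance hypotheses, i.e.\ the standard selectivity of iterated integrals under non-resonance, cf.\ \cite{Sus91,Liu97,Gau14}. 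Among the exponent-$1$ terms, the pure-$(u_1^\varepsilon,u_2^\varepsilon)$ one (order $2$) produces $[g_1\circ J,g_2\circ J](x^0)$, the pure-$(u_3^\varepsilon,u_4^\varepsilon)$ one (order $4$) produces $[[[g_3\circ J,g_4\circ J],g_4\circ J],g_4\circ J](x^0)$, and the remaining mixed exponent-$1$ term vanishes by non-resonance; every further contribution — pure-$(u_1^\varepsilon,u_2^\varepsilon)$ of order $\ge3$, all other mixed terms, and the length-$\ge5$ Chen--Fliess remainder $R$ in \eqref{series_gen} — carries $\varepsilon$-exponent $\ge\tfrac54$ and is $O(\varepsilon^{5/4})$ uniformly on $D'$. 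Substituting the two relations postulated in the statement yields the one-period representation
$$x(\varepsilon)=x^0-\varepsilon\gamma_1 J^{(1)}(x^0)-\varepsilon\gamma_3 J^{(3)}(x^0)+R(\varepsilon),\qquad \|R(\varepsilon)\|\le c_R\,\varepsilon^{5/4},$$
with $c_R$ depending on the suprema over $D'$ of the iterated Lie derivatives $L_{g_{k_5}}\cdots L_{g_{k_2}}g_{k_1}\circ J$ and on the dither amplitudes; equivalently, over each period \eqref{int_many} is approximated by the Lie bracket system $\dot{\bar x}=-\gamma_1 J^{(1)}(\bar x)-\gamma_3 J^{(3)}(\bar x)$.

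The rest is a Lyapunov estimate with $V(x)=\|x-x^*\|^2$. Squaring the one-period representation and invoking Assumption~3 ($J^{(1)}(x)(x-x^*)\ge\beta_{11}\|x-x^*\|^m$, $J^{(3)}(x)(x-x^*)\ge\beta_{21}\|x-x^*\|^{m-2}$, with the matching upper bounds on $\|J^{(1)}\|$ and $\|J^{(3)}\|$) gives, for $\varepsilon$ small and $x^0\in B_\delta(x^*)$,
$$\|x(\varepsilon)-x^*\|^2\le\|x^0-x^*\|^2-2\varepsilon\big(\gamma_1\beta_{11}\|x^0-x^*\|^m+\gamma_3\beta_{21}\|x^0-x^*\|^{m-2}\big)+c\,\varepsilon^{5/4},$$
with $c=c(\delta,\gamma_1,\gamma_3,\beta_{12},\beta_{22},c_R)>0$. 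Splitting on $m$: for $m=2$ the $J^{(3)}$-term is non-negative ($\beta_{21}=0$), so the $J^{(1)}$-term alone gives $\|x(\varepsilon)-x^*\|^2\le(1-2\varepsilon\gamma_1\beta_{11})\|x^0-x^*\|^2+c\varepsilon^{5/4}$; for $m=4$ one has $\|x^0-x^*\|^{m-2}=\|x^0-x^*\|^2$, whence $\|x(\varepsilon)-x^*\|^2\le(1-2\varepsilon\gamma_3\beta_{21})\|x^0-x^*\|^2+c\varepsilon^{5/4}$; and for $m>4$ the dominant term near $x^*$ is $\gamma_3\beta_{21}\|x^0-x^*\|^{m-2}$, which is super-quadratic. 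Iterating this inequality along $t=k\varepsilon$, $k\in\mathbb N$, and interpolating inside each period by the $O(\varepsilon^{1/4})$ excursion bound, exactly as in \cite[Steps~3.I--4.I]{GZE18}, yields in the first two cases a practical exponential bound $\|x(t)-x^*\|\le c_m\|x^0-x^*\|e^{-\lambda t}+\rho$, and in the last case a practical polynomial bound of the type $\big(c_m\|x^0-x^*\|^{4-m}+\tilde c_m t\big)^{-1/(m-4)}+\rho$ with $c_m\to1$ as $\varepsilon\to0$ (cf.\ Lemma~\ref{lem_decay} and the comparison estimates in \cite{GZ13,G16}), which are precisely the claimed practical exponential, resp.\ asymptotic, stability properties of $x^*$.

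The step I expect to be the main obstacle is the Chen--Fliess bookkeeping: one must certify that \emph{every} iterated integral of order $\le4$ whose $\varepsilon$-exponent is $\le1$, other than the two intended brackets, vanishes. The genuinely dangerous contributions are the low-order mixed terms (orders $2$ and $3$, exponents $\tfrac34$ and $1$) and the pure-$(u_3^\varepsilon,u_4^\varepsilon)$ terms of orders $2$ and $3$ (exponents $\tfrac12$ and $\tfrac34$), since any one of them, if present, would dominate the two terms one keeps and destroy the representation. Ruling them out is exactly the purpose of the non-resonance hypotheses together with the specific frequency ratios in $v^{12}$ and $v^{1222}$; carrying this out carefully — rather than the concluding Lyapunov estimate, which is routine once the one-period representation is in hand — is where the real work lies. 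A minor caveat, as in Remark~1, concerns the regularity of $g_k\circ J$ for small $m$, which is handled by the relaxed differentiability conditions discussed there.
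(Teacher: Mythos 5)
Your proposal is correct and follows essentially the same route as the paper's (only sketched) argument: the one-period Chen--Fliess representation $x(\varepsilon)=x^0-\varepsilon\gamma_1 J^{(1)}(x^0)-\varepsilon\gamma_3 J^{(3)}(x^0)+R(\varepsilon)$ with $\|R(\varepsilon)\|=O(\varepsilon^{5/4})$ obtained via the non-resonance conditions, followed by the squared-distance Lyapunov estimate under Assumption~3 and iteration over periods as in Theorem~1 and \cite[Steps~3.I--4.I]{GZE18}. Your explicit bookkeeping of the $\varepsilon$-exponents $n_1/2+n_2/4$ for the mixed iterated integrals is exactly the content the paper delegates to the cited reference, and your case split on $m$ matches the claimed conclusions.
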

We omit the proof because of the space limits. It is similar to the proof of Theorem~1 and uses the fact that, as it is shown in~\cite{GZ24_CDC}, the non-resonance assumption implies that the Chen--Fliess expansion~\eqref{series_gen} for system~\eqref{int_many} takes the form
$$
x(\varepsilon)=x^0- \gamma_1J^{(1)}(x^0)- \gamma_2J^{(3)}(x^0)+R(\varepsilon),
$$
with $\|R(\varepsilon)\|\le C_R\varepsilon^{5/4}$ with some $C_R\ge 0$, and thus, 
%Then
$$
\begin{aligned}
\| &x(\varepsilon)-x^0\|^2=\|x^0-x^*\|^2\\
&-2\varepsilon(\gamma_1J^{(1)}(x^0)+ \gamma_2J^{(2)}(x^0)))(x^0-x^*) +\tilde R(\varepsilon)\\
&\le \|x^0-x^*\|^2-2\varepsilon(\beta_{11}\|x^0-x^*\|^{m}\\
&+\beta_{21}\|x^0-x^*\|^{m-2}) +\tilde R(\varepsilon), \, \tilde R(\varepsilon)=o(\varepsilon^{5/4})\text{ as }\varepsilon\to 0. 
\end{aligned}
$$
A more detailed proof will be given in the extended version of the paper.
%Consider 3 cases assuming $x^0\in B_\delta(x^*)$ with some $\delta>0$:
%\begin{itemize}
%    \item $m=2$. Then 
%    $$%
%\begin{aligned}
%\| x(\varepsilon)-x^0\|^2&\le \|x^0-x^*\|^2(1-2\varepsilon \beta_{11})+\tilde R(\varepsilon),  
%\end{aligned}
%$$
%and there rest of the proof is the same as in Theorem~1.%
%\item $m=4$. Then 
 %   $$
%\begin{aligned}
%\| x(\varepsilon)-x^0\|^2&\le \|x^0-x^*\|^2(1-2\varepsilon (\beta_{21}+\beta_{11}\delta^2)+\tilde R(\varepsilon),  
%\end{aligned}
%$$
%and there rest of the proof is the same as in Theorem~1.
%\item for $m>4$
%\end{itemize}
%\end{proof}
\begin{figure}[!b]
    \centering
\includegraphics[width=1\linewidth]{}
    \caption{Blue: the proposed ES system in \eqref{ex1_we}, which converges practically in an exponential rate to 1, the minimum point of $J_1(x)=(x-1)^4$; green: a traditional ES approach from literature \eqref{ex1_Durr}, which converges practically to 1 in a polynomial rate; red: the minimum point of $J_1$, $x^*=1$.}
   \label{fig_pol4}
\end{figure}
\section{Numerical simulations}
%\subsection{Fourth-order polynomial}
%As the first example
To demonstrate the effectiveness of the proposed approach, we take a cost function as the fourth-order polynomial,
$
J(x)=(x-1)^4,
$
and apply first-order-based approach~\cite{DurrAuto},
\begin{equation}
    \label{ex1_Durr}
\dot x=2\sqrt{\pi\varepsilon^{-1}}\left(J_1(x)\cos{2\pi t}{\varepsilon}^{-1}+\sin{2\pi t}{\varepsilon}^{-1}\right),
\end{equation}
and fourth-order-based approach with two inputs defined as in Statement~1:
\begin{equation}
    \label{ex1_we}
\dot x{=}2\left({2\pi}{\varepsilon}^{-1}\right)^{3/4}\left(3J_1(x)\sin{6\pi t}{\varepsilon}^{-1}{+}\cos{2\pi t}{\varepsilon}^{-1}\right).
\end{equation}
We initiate both equations at $x(0)=0$ and put $\varepsilon=10^{-4}$. The results of the numerical simulations are shown in Fig.\ref{fig_pol4}, demonstrating a significantly faster convergence rate for the solutions of system~\eqref{ex1_we} compared to~\eqref{ex1_Durr}, while also exhibiting a higher amplitude of oscillations. In our future studies, we plan to explore alternative choices for the functions  $g_1$  and  $g_2$, especially,  with vanishing amplitudes similarly to as proposed in~\cite{GZE18}, which have been shown to improve the qualitative behavior of the solutions. Another promising approach for reducing oscillations involves the use of time-varying gain techniques, such as in~\cite{GE24_CDC,Pokh23,yilmaz2025exponential}. 

% To illustrate this idea, we consider the system introduced in~\cite{GE24_CDC},
% \begin{equation}
%     \label{ex1_Dur_tv}
% \dot x=2\sqrt{\frac{\pi}{\varepsilon(1+t)}}\left(J_1(x)\cos\frac{2\pi t}{\varepsilon}+\sin\frac{2\pi t}{\varepsilon}\right).
% \end{equation}
% Although requiring a separate study, we expect that integrating this approach with the results presented in our paper might lead to the following extremum seeking system:
% \begin{equation}
%     \label{ex1_we_tv}
% \dot x=2\left(\frac{2\pi}{\varepsilon}\right)^{3/4}\frac{1}{(1+t)^{1/4}}\left(3J_1(x)\sin\frac{6\pi t}{\varepsilon}+\cos\frac{2\pi t}{\varepsilon}\right).
% \end{equation}
% The results of numerical simulations for both equations with the same data as before are depicted in Fig.~\ref{fig_pol4} (right).

% \subsection{Quadratic polynomial}

% To illustrate the observations made in Section~III.C, consider now the quadratic cost function
% $$
% J_2(x)=(x-1)^2,
% $$
% and system 

\section{Conclusions \& Future Work}

In this paper, we have introduced a novel framework for extremum seeking  control that utilizes higher-order Lie bracket approximations to achieve improved convergence properties, particularly for cost functions with polynomial-like behavior near their minima. Unlike many ES approaches that rely on first-order Lie bracket systems and yield exponential convergence only for quadratic-like cost functions, the proposed approach may ensure the exponential convergence even when the cost function behaves locally like $\|x - x^*\|^m$ with $m > 2$. 
This is achieved by integrating ideas from differential geometric control theory related to higher-order Lie bracket averaging for control-affine systems~\cite{PE23} with control design techniques for high-order nonholonomic systems~\cite{GZ24_CDC,Gau14}, and advanced analytical tools for studying dynamical systems, in particular, the Chen--Fliess series expansion \cite{GZE18,GZ23}.

Let us note that, to better introduce our approach and due to space limitations, we have considered only the case of a single-variable cost function. In our future work, we plan to extend this approach to multi-variable cost functions under less restrictive assumptions on their local behavior, as well as to general extremum seeking problem statement like in~\cite{Durr17,GE21}. Another important research direction concerns exploring the possible choices of generating vector fields in Lemma~3.       We expect that, as in~\cite{GZE18}, it is possible to achieve asymptotic (in particular, exponential) stability in the sense of Lyapunov by appropriately choosing vector fields that vanish at the extremum. In such scenarios, an improved convergence rate would be even more beneficial, as the amplitude of oscillations would also vanish. 

All in all, we believe that this paper opens new possibilities for designing extremum seeking control laws with enhanced flexibility, and  initiates a promising direction for further developments of high-order Lie bracket methods in optimization tasks. 

%This paper provides a novel extremum seeking (ES) approach for model-free and real-time dynamic optimization of higher-degrees polynomial-like cost functions. The approach provided in this paper integrates theories and ideas from differential geometric control theory, particularly higher-order Lie bracket approximations of control-affine systems \cite{PE23} with analysis and design techniques from Lie bracket-based ES literature, particularly Chen-Fliess tools \cite{GZE18} and higher-order nonholonomic systems methods \cite{GZ24_CDC}. Said integration enabled us to introduce a class of ES control laws, which guarantee exponential convergence rate for if the cost function is quadratic-like or higher-degree polynomial-like.

%Building on this work, we aim in the future to extend the results in this paper to multi-variable case, cost functions with dynamic extremum, ES laws that converge exponentially with vanishing oscillations, and general dynamic systems.

%The key idea builds on tools from geometric control theory and Chen--Fliess series expansions.
%Contributions: introduced new approach for optimizing polynomial-like cost functions;  integrated the Chen--Fliess tools with ideas of~\cite{PE23}, which allowed to characterize the system behavior more precisely; demonstrated flexibility in desigining extremum seeking system satisfying the required properties. 
%Further explorations: multi-variable case; general cost functions; generating vector fields; relaxed regularity assumption;  classical asymptotic stability; general dynamical systems. 
 \bibliographystyle{ieeetr}
\bibliography{biblio_ES,references}

\end{document}